\newtheorem{theorem}{Theorem}
\newtheorem{example}{Example}
\newtheorem{lemma}{Lemma}
\newenvironment{proof}{{\bf Proof.}}{\hspace*{1mm}\hfill\rule{2mm}{2mm}}
\newtheorem{pretheorema}{{\bf Theorem}}
\def\a{\underline{\bf 1}}
\def\s{{$\star$}}
\def\z{{$\cdot$}}
\def\M#1#2#3{\mbox{\rm M}(#1\mbox{-}(#2,#3))}
\def\T#1#2#3{#1\mbox{-}(#2,#3)\mbox{ \rm Latin trade}}
\def\SFT#1#2#3{#1\mbox{-}(#2,#3)\mbox{ \sf Latin trade}}
\def\I#1#2#3{#1\mbox{-}(#2,#3)\mbox{ \rm intercalate}}
\def\SFI#1#2#3{#1\mbox{-}(#2,#3)\mbox{ \sf intercalate}}
\def\ITI#1#2#3{#1\mbox{-}(#2,#3)\mbox{ \it intercalate}}
\def\m#1#2{\raise 0.2ex\hbox{%        % You can change 0.2
    ${#1_{\displaystyle #2}}$}}
\def\xx#1{\raise 0.5ex\hbox{%         % You can change 0.5
    ${#1}$}}
\def\n#1{{$#1$}}
\def\arraystretch{1.0}                % You can change 1.0
\def\pls{partial Latin square}
\def\x{{\bf x}}
\def\y{{\bf y}}
\def\u{{\bf u}}
\title{\bf A linear algebraic approach to orthogonal arrays and Latin squares}
\author{A. A. Khanban\footnote{Department of Computing, Imperial
College London, London SW7 2BZ, United Kingdom. ({\tt
khanban@doc.ic.ac.uk})} , M. Mahdian\footnote{Yahoo! Research,
Santa Clara, CA, USA. ({\tt mahdian@alum.mit.edu})} , and E. S.
Mahmoodian\footnote{Department of Mathematics, Sharif University
of Technology, and Institute for Studies in Theoretical Physics
and Mathematics (IPM), Tehran, Iran. ({\tt emahmood@sharif.edu})}}
\date{}
\begin{document}

\maketitle

%%%%%%%%%%%%%%%%%% remove the following line for the final print %%%%%
%\vspace*{-8mm}version {\Huge oals44(Mahdian)}\vspace*{-4mm}

\begin{abstract}
To study orthogonal arrays and signed orthogonal arrays,
Ray-Chaudhuri and Singhi (1988 and 1994) considered some module
spaces. Here, using a linear algebraic approach we define an
inclusion matrix and find its rank. In the special case of Latin
squares we show that there is a straightforward algorithm for
generating a basis for this matrix using the so-called
intercalates. We also extend this last idea.
\end{abstract}

{\bf Keywords}: Orthogonal arrays, Latin squares, basis for
inclusion matrix, Latin trades

\section{Introduction and preliminaries}

To show the existence of signed orthogonal arrays, Ray-Chaudhuri
and Singhi considered a space of linear forms in variables and
calculated its rank, see \cite{MR924450}. Later they pointed out
an error in their calculation and provided a correction,
see~\cite{MR1275740}. Here we define a natural inclusion matrix
corresponding to orthogonal arrays and signed orthogonal arrays.
We compute the rank of this matrix and study  bases of its null
space. This provides helpful insight for studying these objects.
In the special case of Latin squares we show that there is a
straightforward algorithm for generating a basis for this matrix
using the so-called intercalates. We also extend this  idea for
more general cases.

We follow the notations of \cite{MR924450} as much as possible.
Let $V := \{0, 1, \ldots, v-1\}$ and $V^k$ be the set of all
ordered $k$-tuples of the elements of $V$, i.e., $V^k := \{(x_1,
\ldots, x_k) \mid x_i\in V, i=1, \ldots, k\}$. Also, let $V_I^t
:=\{(u_1,\ldots,u_t)_I\mid u_i\in V, i=1, \ldots, t\}$, where $I$
is a subset of size $t$ of the set $\{1,\ldots,k\}$. For a pair of
elements of $V^k$ and $V_I^t$, where $I= \{i_1,\ldots,i_t\}$ and
$i_1 < \cdots < i_t$, we define:
\ \ $(u_1,\ldots,u_t)_I\in (x_1,\ldots, x_k) \quad
\Longleftrightarrow \quad u_j=x_{i_j},\qquad j=1,\ldots,t. $

An {\sf orthogonal array} ${\rm OA}_t(v,k,\lambda)$ on a set $V$
is a collection of $k$-tuples of elements of $V$ such that for
each $I \subseteq \{1,\ldots,k\}$, $|I|=t$, every element of
$V_I^t$ belongs to exactly $\lambda$ elements of the collection.
Orthogonal arrays were first defined by Rao~\cite{MR0022821} and
have been used in studying designs and codes.

Next we define the {\sf $t$-inclusion matrix} $\M{t}{v}{k}$.
Columns of this matrix correspond to the elements of $V^k$ (in
lexicographic order) and its rows correspond to the elements of
$\cup_I V_I^t$, where the union is over all $t$-subsets of
$\{1,\ldots,k\}$. The entries of the matrix are 0 or 1, and are
defined as follows. \
\ \ $M_{(u_1,\ldots,u_t)_I, (x_1,\ldots,x_k)} = 1 \quad
\Longleftrightarrow \quad (u_1,\ldots,u_t)_I\in (x_1,\ldots, x_k).
$
%
%\vspace*{-1mm}
\begin{example}\label{ex:m-2-(3,3)}
In Figure~\ref{fig:m-2-(3,3)}, the matrix $\M{2}{3}{3}$ is shown.
To make it more readable, 0 entries of the matrix are represented
by ``\z'' signs. The ``\s'' signs may be ignored for time being,
they will be explained in Section~\ref{sec:OA}.
\end{example}
%
%\vspace*{-1mm}
\begin{figure}[ht]\label{fig:m-2-(3,3)}
\begin{center}
{\footnotesize
\def\arraystretch{1.0}
\begin{tabular}{r@{\hspace{0.3mm}}r@{\hspace{0mm}}|c@{\hspace{1.9mm}}c@{\hspace{1.9mm}}c@
{\hspace{1.9mm}}c@{\hspace{1.9mm}}c@{\hspace{1.9mm}}c@{\hspace{1.9mm}}c@{\hspace{1.9mm}}c@
{\hspace{1.9mm}}c@{\hspace{1.9mm}}c@{\hspace{1.9mm}}c@{\hspace{1.9mm}}c@{\hspace{1.9mm}}c@
{\hspace{1.9mm}}c@{\hspace{1.9mm}}c@{\hspace{1.9mm}}c@{\hspace{1.9mm}}c@{\hspace{1.9mm}}c@
{\hspace{1.9mm}}c@{\hspace{1.9mm}}c@{\hspace{1.9mm}}c@{\hspace{1.9mm}}c@{\hspace{1.9mm}}c@
{\hspace{1.9mm}}c@{\hspace{1.9mm}}c@{\hspace{1.9mm}}c@{\hspace{1.9mm}}c@{\hspace{1.9mm}}c}
\multicolumn{2}{r}{}&\s&\s&\s&\s&\s&\s&\s&\s&\s&\s&\s&\s&\s& &
&\s& & &\s&\s&\s&\s& & &\s& &
\\[-0.8mm]
\multicolumn{2}{r}{}&0&0&0&0&0&0&0&0&0&1&1&1&1&1&1&1&1&1&2&2&2&2&2&2&2&2&2&\hspace*{1cm}\\[-0.8mm]
\multicolumn{2}{r}{}&0&0&0&1&1&1&2&2&2&0&0&0&1&1&1&2&2&2&0&0&0&1&1&1&2&2&2\\[-0.8mm]
\multicolumn{2}{r}{}&0&1&2&0&1&2&0&1&2&0&1&2&0&1&2&0&1&2&0&1&2&0&1&2&0&1&2\\[-0.8mm]
\cline{3-29}
&$(0,0)_{\{1,2\}}$&1&1&1&\z&\z&\z&\z&\z&\z&\z&\z&\z&\z&\z&\z&\z&\z&\z&\z&\z&\z&\z&\z&\z&\z&\z&
     \z{\vrule height4mm width0em depth-0.1mm}\\[-0.8mm]
&$(0,1)_{\{1,2\}}$&\z&\z&\z&1&1&1&\z&\z&\z&\z&\z&\z&\z&\z&\z&\z&\z&\z&\z&\z&\z&\z&\z&\z&\z&\z&\z\\[-0.8mm]
&$(0,2)_{\{1,2\}}$&\z&\z&\z&\z&\z&\z&1&1&1&\z&\z&\z&\z&\z&\z&\z&\z&\z&\z&\z&\z&\z&\z&\z&\z&\z&\z\\[-0.8mm]
&$(1,0)_{\{1,2\}}$&\z&\z&\z&\z&\z&\z&\z&\z&\z&1&1&1&\z&\z&\z&\z&\z&\z&\z&\z&\z&\z&\z&\z&\z&\z&\z\\[-0.8mm]
\s&$(1,1)_{\{1,2\}}$&\z&\z&\z&\z&\z&\z&\z&\z&\z&\z&\z&\z&\a&1&1&\z&\z&\z&\z&\z&\z&\z&\z&\z&\z&\z&\z\\[-0.8mm]
\s&$(1,2)_{\{1,2\}}$&\z&\z&\z&\z&\z&\z&\z&\z&\z&\z&\z&\z&\z&\z&\z&\a&1&1&\z&\z&\z&\z&\z&\z&\z&\z&\z\\[-0.8mm]
&$(2,0)_{\{1,2\}}$&\z&\z&\z&\z&\z&\z&\z&\z&\z&\z&\z&\z&\z&\z&\z&\z&\z&\z&1&1&1&\z&\z&\z&\z&\z&\z\\[-0.8mm]
\s&$(2,1)_{\{1,2\}}$&\z&\z&\z&\z&\z&\z&\z&\z&\z&\z&\z&\z&\z&\z&\z&\z&\z&\z&\z&\z&\z&\a&1&1&\z&\z&\z\\[-0.8mm]
\s&$(2,2)_{\{1,2\}}$&\z&\z&\z&\z&\z&\z&\z&\z&\z&\z&\z&\z&\z&\z&\z&\z&\z&\z&\z&\z&\z&\z&\z&\z&\a&1&1\\[1.5mm]
&$(0,0)_{\{1,3\}}$&1&\z&\z&1&\z&\z&1&\z&\z&\z&\z&\z&\z&\z&\z&\z&\z&\z&\z&\z&\z&\z&\z&\z&\z&\z&\z\\[-0.8mm]
&$(0,1)_{\{1,3\}}$&\z&1&\z&\z&1&\z&\z&1&\z&\z&\z&\z&\z&\z&\z&\z&\z&\z&\z&\z&\z&\z&\z&\z&\z&\z&\z\\[-0.8mm]
&$(0,2)_{\{1,3\}}$&\z&\z&1&\z&\z&1&\z&\z&1&\z&\z&\z&\z&\z&\z&\z&\z&\z&\z&\z&\z&\z&\z&\z&\z&\z&\z\\[-0.8mm]
\s&$(1,0)_{\{1,3\}}$&\z&\z&\z&\z&\z&\z&\z&\z&\z&\a&\z&\z&1&\z&\z&1&\z&\z&\z&\z&\z&\z&\z&\z&\z&\z&\z\\[-0.8mm]
\s&$(1,1)_{\{1,3\}}$&\z&\z&\z&\z&\z&\z&\z&\z&\z&\z&\a&\z&\z&1&\z&\z&1&\z&\z&\z&\z&\z&\z&\z&\z&\z&\z\\[-0.8mm]
\s&$(1,2)_{\{1,3\}}$&\z&\z&\z&\z&\z&\z&\z&\z&\z&\z&\z&\a&\z&\z&1&\z&\z&1&\z&\z&\z&\z&\z&\z&\z&\z&\z\\[-0.8mm]
\s&$(2,0)_{\{1,3\}}$&\z&\z&\z&\z&\z&\z&\z&\z&\z&\z&\z&\z&\z&\z&\z&\z&\z&\z&\a&\z&\z&1&\z&\z&1&\z&\z\\[-0.8mm]
\s&$(2,1)_{\{1,3\}}$&\z&\z&\z&\z&\z&\z&\z&\z&\z&\z&\z&\z&\z&\z&\z&\z&\z&\z&\z&\a&\z&\z&1&\z&\z&1&\z\\[-0.8mm]
\s&$(2,2)_{\{1,3\}}$&\z&\z&\z&\z&\z&\z&\z&\z&\z&\z&\z&\z&\z&\z&\z&\z&\z&\z&\z&\z&\a&\z&\z&1&\z&\z&1\\[1.8mm]
\s&$(0,0)_{\{2,3\}}$&\a&\z&\z&\z&\z&\z&\z&\z&\z&1&\z&\z&\z&\z&\z&\z&\z&\z&1&\z&\z&\z&\z&\z&\z&\z&\z\\[-0.8mm]
\s&$(0,1)_{\{2,3\}}$&\z&\a&\z&\z&\z&\z&\z&\z&\z&\z&1&\z&\z&\z&\z&\z&\z&\z&\z&1&\z&\z&\z&\z&\z&\z&\z\\[-0.8mm]
\s&$(0,2)_{\{2,3\}}$&\z&\z&\a&\z&\z&\z&\z&\z&\z&\z&\z&1&\z&\z&\z&\z&\z&\z&\z&\z&1&\z&\z&\z&\z&\z&\z\\[-0.8mm]
\s&$(1,0)_{\{2,3\}}$&\z&\z&\z&\a&\z&\z&\z&\z&\z&\z&\z&\z&1&\z&\z&\z&\z&\z&\z&\z&\z&1&\z&\z&\z&\z&\z\\[-0.8mm]
\s&$(1,1)_{\{2,3\}}$&\z&\z&\z&\z&\a&\z&\z&\z&\z&\z&\z&\z&\z&1&\z&\z&\z&\z&\z&\z&\z&\z&1&\z&\z&\z&\z\\[-0.8mm]
\s&$(1,2)_{\{2,3\}}$&\z&\z&\z&\z&\z&\a&\z&\z&\z&\z&\z&\z&\z&\z&1&\z&\z&\z&\z&\z&\z&\z&\z&1&\z&\z&\z\\[-0.8mm]
\s&$(2,0)_{\{2,3\}}$&\z&\z&\z&\z&\z&\z&\a&\z&\z&\z&\z&\z&\z&\z&\z&1&\z&\z&\z&\z&\z&\z&\z&\z&1&\z&\z\\[-0.8mm]
\s&$(2,1)_{\{2,3\}}$&\z&\z&\z&\z&\z&\z&\z&\a&\z&\z&\z&\z&\z&\z&\z&\z&1&\z&\z&\z&\z&\z&\z&\z&\z&1&\z\\[-0.8mm]
\s&$(2,2)_{\{2,3\}}$&\z&\z&\z&\z&\z&\z&\z&\z&\a&\z&\z&\z&\z&\z&\z&\z&\z&1&\z&\z&\z&\z&\z&\z&\z&\z&1\\[-0.8mm]
\end{tabular}
}
\end{center}
\caption{$\M{2}{3}{3}$}
\end{figure}

A {\sf Latin square} $L$ of order $v$ is a $v\times v$ array with
entries chosen from a set, say $V=\{0,1,\dots, v-1\}$ in such a
way that each element of $V$ occurs precisely once in each row
and in each column of the array. For ease of exposition, a Latin
square $L$ will be represented by a set of ordered triples
$\{(i,j;L_{ij})\mid {\rm element\ } L_{ij} {\rm \ occurs \ in\ }
{\rm cell\ } (i,j) {\rm \ of\ the\ array}\}$.

It is folkloric that any Latin square of order $v$ is equivalent
to an ${\rm OA}_2(v,3,1)$. It is easy to see that any ${\rm
OA}_t(v,k,\lambda)$ can be thought of a solution to the equation
\begin{equation}\label{eq:eqnf}
{\rm M}{\bf F}= \lambda \overline{\bf 1},
\end{equation}
where ${\rm M}=\M{t}{v}{k}$, $\overline{{\bf 1}}$ is a vector of
appropriate size with all components equal to 1, and ${\bf F}$ is
a non-negative integer-valued frequency vector, i.e., ${\bf
F}(\x)$ represents the number of times that OA contains the
ordered $k$-tuple $\x$. Our discussion will be based on the field
of real numbers.

In Section~\ref{sec:OA} we find the nullity and the rank of
$\M{t}{v}{k}$.  In Section~\ref{sec:Latinsquare} we show that a
very simple basis exists in the case of $\M{2}{v}{3}$, i.e., when
the OA corresponds to a Latin square, which consist of so-called
intercalates. In Section~\ref{sec:nullspace} we generalize the
result of Section~\ref{sec:Latinsquare} for any $\M{t}{v}{t+1}$.

\section{Orthogonal arrays}
\label{sec:OA}

The main result of this section is the following theorem,
%see~\cite{KhanbanMSc}.

\begin{theorem}\label{th:rank}
The rank of the matrix $\M{t}{v}{k}$ is equal to
\[
{\rm rank}(M) = \sum_{i=0}^t{k\choose i}(v-1)^i.
\]
\end{theorem}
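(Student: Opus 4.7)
The plan is to identify $\mathrm{rank}(M)$ with the dimension of a natural subspace $R\subseteq \mathbb{R}^{V^k}$ and compute it via a tensor decomposition. Each row of $M$ indexed by $(u_1,\ldots,u_t)_I$ is the indicator function of the cylinder $C(I,u):=\{x\in V^k: x_{i_j}=u_j,\ j=1,\ldots,t\}$, so $\mathrm{rank}(M)=\dim R$ where $R$ is the span of these indicators.

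First I would enlarge the generating set of $R$ to include cylinders of every codimension $\le t$: for any $I'$ with $|I'|<t$, any $u'\in V^{I'}$, and any $j_0\notin I'$,
\[
\mathbf{1}_{C(I',u')} \;=\; \sum_{w \in V} \mathbf{1}_{C(I' \cup \{j_0\},\, (u',w))},
\]
so iterated application shows $\mathbf{1}_{C(J,u)}\in R$ for every $J\subseteq\{1,\ldots,k\}$ with $|J|\le t$ and every $u\in V^J$. Consequently $R$ is precisely the space of functions expressible as $\sum_{|J|\le t}g_J(x|_J)$. Next I would apply a tensor decomposition: write $\mathbb{R}^V=E\oplus N$, with $E$ the line of constants and $N$ the $(v-1)$-dimensional hyperplane of zero-sum functions, and tensor across coordinates to obtain
\[
\mathbb{R}^{V^k} \;=\; \bigoplus_{S \subseteq \{1,\ldots,k\}} H_S, \qquad H_S \;=\; \bigotimes_{j\in S}N_j \,\otimes\, \bigotimes_{j\notin S}E_j,
\]
where $\dim H_S=(v-1)^{|S|}$.

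The crux is then to prove $R=\bigoplus_{|S|\le t}H_S$. For ``$\supseteq$'', a vector in $H_S$ depends only on coordinates in $S$, so extending $S$ to any $t$-subset $I$ realises it as some $g_I(x|_I)\in R$. For ``$\subseteq$'', each $g_J(x|_J)$ in the description of $R$ decomposes via the same tensor splitting on $\mathbb{R}^{V^J}$ into components lying in $H_S$ with $S\subseteq J$, so $R\subseteq \bigoplus_{|S|\le t}H_S$. Once this is established, summing dimensions yields
\[
\mathrm{rank}(M) \;=\; \sum_{|S|\le t}(v-1)^{|S|} \;=\; \sum_{i=0}^{t}\binom{k}{i}(v-1)^i.
\]
The only genuinely non-mechanical step is the identification $R=\bigoplus_{|S|\le t}H_S$; everything else is bookkeeping and dimension counting.
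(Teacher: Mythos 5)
Your proof is correct, but it follows a genuinely different route from the paper. The paper argues at the level of explicit matrix entries: it exhibits, for every column $C_\x$ with at most $t$ nonzero coordinates, a row whose leading (pivot) $1$ falls in that column, giving the lower bound $\mathrm{rank}(M)\ge\sum_{i=0}^t\binom{k}{i}(v-1)^i$; and it proves the matching upper bound through an explicit inclusion--exclusion dependence $\sum_{\y\in F_\x}(-1)^{L_\y}C_\y=\overline{\bf 0}$ showing every column with more than $t$ nonzero coordinates lies in the span of those with at most $t$. You instead identify the row space with the space of sums $\sum_{|J|\le t} g_J(x|_J)$ of functions depending on at most $t$ coordinates, and compute its dimension via the tensor splitting $\mathbb{R}^V=E\oplus N$ (constants plus zero-sum functions), obtaining $\mathrm{rank}(M)=\dim\bigoplus_{|S|\le t}H_S=\sum_{i=0}^t\binom{k}{i}(v-1)^i$ in one stroke; both inclusions you need for $R=\bigoplus_{|S|\le t}H_S$ are argued correctly. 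What each approach buys: the paper's argument is elementary and constructive, producing a concrete column basis (the columns $C_\x$ with $L_\x\le t$) together with the explicit dependency relation of Lemma~\ref{lem:20}, which is exactly the signed relation that reappears as the intercalate $(x_0-x_{i_1})\cdots(x_0-x_{i_{k}})$ in Sections~\ref{sec:Latinsquare} and~\ref{sec:nullspace}; your decomposition is more structural, gives upper and lower bounds simultaneously without any lexicographic bookkeeping, and in fact explains conceptually why the null space of $\M{t}{v}{t+1}$ is spanned by products of differences, since that null space is precisely the top component $H_{\{1,\ldots,t+1\}}=N^{\otimes(t+1)}$ of your splitting.
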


This theorem results from the following lemmas. But first we need
the following notations. For every ordered $k$-tuple
$\x=(x_1,\ldots,x_k)$, the set $F_{\x}$ is defined as
\[
F_\x = \{(z_1,\ldots,z_k)\mid z_i\in\{0,x_i\}, i=1,\ldots,k\}.
\]
Also, we define $A_\x = \{i\mid x_i\neq 0\}$, $L_\x = |A_\x|$,
and let $C_\x$ denote the column of the matrix $M$ corresponding
to the $k$-tuple $\x$.

\begin{lemma}\label{lem:16}
For every $\x\in F_\y$, $\x\neq\y$, we have $L_\x<L_\y$ and
$\x\prec \y$, where $\prec$ denotes the lexicographic order.
\end{lemma}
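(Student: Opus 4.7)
The plan is to unpack the definitions and observe that both conclusions follow almost immediately from a single structural fact: since $\x \in F_\y$, each coordinate $x_i$ lies in $\{0, y_i\}$, so $x_i$ is either zero or agrees with $y_i$.

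First I would establish the containment $A_\x \subseteq A_\y$. If $i \in A_\x$, then $x_i \neq 0$, and since $x_i \in \{0, y_i\}$ we must have $x_i = y_i$ and $y_i \neq 0$, hence $i \in A_\y$. Next I would use the hypothesis $\x \neq \y$ to locate a coordinate $i$ with $x_i \neq y_i$; the only option compatible with $x_i \in \{0, y_i\}$ is $x_i = 0$ and $y_i \neq 0$, so $i \in A_\y \setminus A_\x$. Combined with the containment, this shows $A_\x \subsetneq A_\y$, and therefore $L_\x = |A_\x| < |A_\y| = L_\y$.

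For the lexicographic comparison, I would take $j$ to be the smallest index at which $\x$ and $\y$ differ. By the same dichotomy, $x_j = 0$ and $y_j \neq 0$, so $x_j < y_j$ in the ordering on $V = \{0, 1, \ldots, v-1\}$, while $x_i = y_i$ for all $i < j$. By definition of the lexicographic order on $V^k$, this yields $\x \prec \y$.

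There is no real obstacle here; the statement is a definitional unfolding. The only mild point worth being careful about is the direction of the conclusion $A_\x \subseteq A_\y$ (not the reverse), which requires noting that a nonzero $x_i$ forces $x_i = y_i$ precisely because $x_i \in \{0, y_i\}$ rules out any other value.
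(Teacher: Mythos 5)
Your proof is correct and follows essentially the same route as the paper: both arguments unwind the definition of $F_\y$ to get $x_i\in\{0,y_i\}$ in each coordinate, deduce $A_\x\subseteq A_\y$ with strictness from $\x\neq\y$ (the paper phrases this as $L_\x\le L_\y$ plus a contradiction at equality, you as a proper inclusion, which is the same content), and obtain $\x\prec\y$ from coordinatewise comparison. No gaps.
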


\begin{proof}
Clearly, for every $i$, if $x_i\neq 0$, then $y_i=x_i\neq 0$.
Therefore, $L_\x\leq L_\y$. Now if the equality $L_\x=L_\y$ holds,
then for every non-zero $y_i$, the corresponding $x_i$ is non-zero
and hence equal to $y_i$. This implies $\x=\y$, contradicting the
hypothesis. For the second part, notice that for every $i$, either
$x_i=y_i$, or \, $0=x_i\leq y_i$. Hence $\x\preceq \y$ and since
$\x\neq \y$, we have $\x\prec\y$.
\end{proof}

\begin{lemma}\label{lem:17}
The number of linearly independent rows of $\M{t}{v}{k}$ is at
least the number of columns $C_\x$ with $L_\x\leq t$.
\end{lemma}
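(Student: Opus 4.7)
The plan is to construct, for every column $\x$ with $L_\x \le t$, a distinct row $r_\x$ of $M$ such that the square submatrix of $M$ indexed by these rows and columns becomes upper triangular with $1$'s on the diagonal after an appropriate reordering. Any such submatrix is non-singular, and its size equals the number of columns $C_\x$ with $L_\x\le t$, giving the claimed lower bound on the number of linearly independent rows (equivalently, the rank) of $M$.

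First I would build the row $r_\x$: given $\x$ with $L_\x\le t$, let $I_\x$ be the $t$-subset of $\{1,\ldots,k\}$ consisting of $A_\x$ together with the first $t-L_\x$ indices outside $A_\x$, and put $r_\x := (x_{i_1},\ldots,x_{i_t})_{I_\x}$ where $I_\x=\{i_1<\cdots<i_t\}$. Then $M_{r_\x,\x}=1$ directly from the definition of $M$. Moreover the $r_\x$ are pairwise distinct: if $r_\x = r_{\x'}$ then $I_\x = I_{\x'}$ and $\x|_{I_\x}=\x'|_{I_{\x'}}$, and since both $\x$ and $\x'$ vanish outside this common index set (their support sets lie inside $I_\x$), we must have $\x=\x'$.

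The heart of the argument is triangularity. I would order the chosen columns $\{\x : L_\x\le t\}$ and the chosen rows $\{r_\x\}$ in parallel, first by $L_\x$ ascending and then by any fixed tiebreaker. The claim is that whenever $M_{r_\x,\y}=1$ for some chosen column $\y\neq\x$, one has $L_\y>L_\x$. Unpacking, $M_{r_\x,\y}=1$ forces $\y|_{I_\x}=\x|_{I_\x}$; combined with the fact that $\x$ is zero outside $I_\x$, this gives $x_i\in\{0,y_i\}$ for every $i$, i.e.\ $\x\in F_\y$. Lemma~\ref{lem:16} then supplies $L_\x<L_\y$, placing $\y$ strictly later than $\x$ in the ordering. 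Hence every off-diagonal non-zero entry of the submatrix lies strictly above the diagonal, and the submatrix is upper triangular with unit diagonal, as required.

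The only mildly delicate point is insisting that $I_\x$ contain $A_\x$: this is exactly what makes $\x$ vanish on $I_\x^c$ and thereby puts the relation $\x\in F_\y$ within reach for every $\y$ that hits the row $r_\x$. Once that is noticed, Lemma~\ref{lem:16} does essentially all the remaining work for free.
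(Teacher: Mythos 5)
Your proof is correct and takes essentially the same route as the paper: you construct the same rows (a $t$-set $I\supseteq A_\x$ with $\x$ restricted to it), and the decisive step is identical, namely that $M_{r_\x,\y}=1$ forces $\x\in F_\y$ so that Lemma~\ref{lem:16} applies. The only cosmetic difference is the bookkeeping at the end: you conclude via an upper-triangular submatrix ordered by $L_\x$, while the paper orders columns lexicographically and argues that each chosen row has its leading $1$ in column $C_\x$; both rest on the same lemma and yield the same bound.
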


\begin{proof}
We show that for every column $C_\x$ with $L_\x\leq t$, there
exists a corresponding row such that its pivot 1 (that is the
first 1 in that row) is in column $C_\x$. These rows are clearly
linearly independent.
Corresponding to the column $C_\x$, we construct an element
$\u_I=(u_1,\ldots,u_t)_I$ of $V_I^t$ as follows.
Since $L_\x\leq t$, we can pick a set $I=\{i_1,\ldots,i_t\}$ with
$A_\x\subseteq I$. Note that there may be more than one choice for
$I$, but choosing any of them will serve our purpose. For every
$j=1,\ldots, t$, we let $u_j = x_{i_j}$.
This defines an element $\u_I$ such that $\u_I\in\x$. Therefore,
there is a 1 in the intersection of the column $C_\x$, and the
row corresponding to $\u_I$ in $\M{t}{v}{k}$.

Next, we prove that every $\y$ such that $\u_I\in\y$ satisfies
$\x\preceq \y$. This would imply that the first 1 in the row
corresponding to $\u_I$ lies in the column $C_\x$, which completes
the proof. Since for every $j=1,\ldots,t$, $u_j=x_{i_j}$ and $u_j
= y_{i_j}$, we have $x_i=y_i$ for every $i\in I$. Also, since
$x_i=0$ for every $i\not\in I$, we have $x_i\in\{0,y_i\}$ for
every $i$, or in other words, $\x\in F_\y$. Therefore, by
Lemma~\ref{lem:16}, $\x\preceq \y$.
\end{proof}

\begin{lemma}\label{lem:19}
${\rm rank}(\M{t}{v}{k})\geq\sum_{i=0}^t{k\choose i} (v-1)^i$.
\end{lemma}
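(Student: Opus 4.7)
The plan is to simply invoke Lemma~\ref{lem:17} and reduce the task to a counting problem: count the number of $k$-tuples $\x \in V^k$ with $L_\x \leq t$, then observe that this matches the claimed sum.

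First, I would stratify the tuples by their exact support size. For each integer $i$ with $0 \leq i \leq t$, I would count the tuples $\x = (x_1,\ldots,x_k)$ such that $L_\x = i$, i.e., exactly $i$ of the coordinates are nonzero. Choosing the support $A_\x \subseteq \{1,\ldots,k\}$ of size $i$ contributes a factor $\binom{k}{i}$, and independently assigning each of the $i$ coordinates in $A_\x$ a value from $V \setminus \{0\} = \{1,\ldots,v-1\}$ contributes a factor $(v-1)^i$. The remaining coordinates are forced to be $0$. Thus the number of tuples with $L_\x = i$ is exactly $\binom{k}{i}(v-1)^i$.

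Summing over $i = 0, 1, \ldots, t$, the total number of columns $C_\x$ with $L_\x \leq t$ equals $\sum_{i=0}^{t}\binom{k}{i}(v-1)^i$. By Lemma~\ref{lem:17}, the number of linearly independent rows of $\M{t}{v}{k}$ is at least this quantity, and since the rank of a matrix equals the number of linearly independent rows, this yields $\mathrm{rank}(\M{t}{v}{k}) \geq \sum_{i=0}^{t}\binom{k}{i}(v-1)^i$.

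There is no real obstacle here; the work was done in Lemmas~\ref{lem:16} and~\ref{lem:17}, which set up the pivot argument. The only thing to check is that the counting is correct, in particular that tuples with different support sizes are genuinely disjoint (which is immediate from the definition of $L_\x$) and that $0 \in V$ so that the ``zero coordinate'' is a legitimate choice (which holds since $V = \{0, 1, \ldots, v-1\}$).
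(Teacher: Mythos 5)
Your proof is correct and follows the same route as the paper: apply Lemma~\ref{lem:17} and count the columns $C_\x$ with $L_\x \le t$ by stratifying on the support size, which gives exactly $\sum_{i=0}^{t}\binom{k}{i}(v-1)^i$. The extra detail you supply on the counting (choosing the support and then the nonzero values) is just a fuller write-up of the step the paper calls ``easy to note.''
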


\begin{proof}
By Lemma \ref{lem:17}, the rank of $M$ is at least the number of
columns $C_\x$ with $L_\x\leq t$. It is easy to note that the
number of vectors $\x$ for which $L_\x = i$ is equal to
${k\choose i} (v-1)^i$. This completes the proof of the lemma.
\end{proof}

As it can be noted in Figure~\ref{fig:m-2-(3,3)}, in each column
$C_{\x}$ where at least one of the components of $\x$ is 0, i.e.
$L_{\x} \le 2$, there exists at least one row having a pivot 1 in
that column. For example for the column $C_{010}$ both
$(0,1)_{\{1,2\}}$ and $(1,0)_{\{2,3\}}$ rows have such property.
All such columns and one of the rows corresponding to that column
are indicated with a ``\s'' sign and such 1's are shown in bold
face and underlined.

To show the other direction in Theorem~\ref{th:rank}, we prove
the following lemma.

\begin{lemma}\label{lem:20}
For every vector $\x \in V^k$ with $L_\x > t$, we have
\begin{equation}\label{eq:eqn1}
\sum_{\y\in F_\x} (-1)^{L_\y}C_\y = \overline{\bf 0}.
\end{equation}
Where $\overline{{\bf 0}}$ is a vector of appropriate size with
all components equal to $0$.
\end{lemma}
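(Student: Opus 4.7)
The plan is to verify the identity row by row: for each row index $\u_I=(u_1,\ldots,u_t)_I$ of $\M{t}{v}{k}$, I want to show that the $\u_I$-entry of the left-hand side, namely $\sum_{\y\in F_\x}(-1)^{L_\y}M_{\u_I,\y}$, vanishes. Since $M_{\u_I,\y}=1$ iff $\u_I\in\y$, this reduces to showing $\sum_{\y\in S}(-1)^{L_\y}=0$, where $S$ is the subset of $\y\in F_\x$ containing $\u_I$.

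The next step is to describe $S$ concretely. For each $j=1,\ldots,t$, the condition $\u_I\in\y$ forces $y_{i_j}=u_j$, while $\y\in F_\x$ forces $y_{i_j}\in\{0,x_{i_j}\}$. If $u_j\notin\{0,x_{i_j}\}$ for some $j$, then $S$ is empty and the sum is trivially $0$. Otherwise every coordinate indexed by $I$ is pinned to a single value, and the coordinates $i\notin I$ remain free to take either value in $\{0,x_i\}$.

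Then I would factorize the sign. Let $m$ be the (fixed) number of $j$ with $u_j\neq 0$; then for $\y\in S$,
\[
L_\y \;=\; m + |\{\,i\notin I : y_i = x_i \neq 0\,\}|.
\]
Setting $B = A_\x\setminus I$, the free coordinates split into those in $B$ (where both choices $y_i=0$ and $y_i=x_i$ are available, but only the latter contributes to $L_\y$) and those $i\notin I\cup A_\x$ (where $y_i=0$ is forced and contributes nothing). The sum over $\y\in S$ therefore factors as a product over $i\notin I$, and each index $i\in B$ contributes a factor $(-1)^0+(-1)^1=0$.

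The crux is that $B$ is nonempty, so at least one zero factor appears. This is exactly where the hypothesis $L_\x>t$ is used: since $|A_\x|=L_\x>t=|I|$, the set $A_\x$ cannot lie inside $I$, hence $B=A_\x\setminus I\neq\emptyset$. I expect the only bookkeeping obstacle to be making sure the empty-$S$ case and the coordinates in $A_\x\cap I$ (which are forced to $y_i=x_i\neq 0$, contributing to $m$ rather than to the variable part) are handled cleanly; everything else is a direct factorization.
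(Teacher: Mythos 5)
Your proof is correct and follows essentially the same route as the paper: fix a row $\u_I$, observe that the coordinates in $I$ are pinned (or the row contributes nothing), and show the signed count over the free coordinates vanishes because $A_\x\setminus I\neq\emptyset$, which is exactly where $L_\x>t$ enters. The only cosmetic difference is that you keep the sum over free choices in factored form $\bigl((-1)^0+(-1)^1\bigr)^{|A_\x\setminus I|}=0$, whereas the paper expands it as the alternating binomial sum $\sum_j(-1)^j\binom{L_\x-\alpha}{j}=0$.
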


\begin{proof}
It is enough to focus on a fixed row, say $\u_I$, and count the
number of ones in the intersection of this row and columns $C_\y$
for $\y\in F_\x$, by taking the signs in the above expression into
account, and show that the corresponding entry in the left-hand
side of the above equation is 0.

Consider a row $\u_I$ with $I=\{i_1,\ldots,i_t\}$ where
$i_1<\cdots<i_t$.

If there is an element $i_j\in I\setminus A_\x$ such that $u_j\neq
0$, then all the entries in the intersection of this row and
columns $C_\y$ with $\y\in F_\x$ are 0, since for all such $\y$,
$y_{i_j}$ is 0 and therefore is not equal to $u_j$. Thus, the
entry in the left-hand side of Equation~(\ref{eq:eqn1}) in the
row corresponding to $\u_I$ is 0.

Now, suppose $u_j=0$, for every $j$ where $i_j\in I\setminus
A_\x$. Consider the set $Y$ of vectors, $\y\in F_\x$, such that
the entry in the intersection of the row corresponding to $\u_I$
and the column $C_\y$ is 1. For every $\y\in Y$, $\alpha =
|A_\x\cap I|$ of its entries have a value equal to the
corresponding entry in $\u_I$. Therefore, there are $L_\x-\alpha$
entries in $\y$ that can take either a value of 0, or the value
of the corresponding entry in $\x$. We call these $L_\x-\alpha$
entries the {\em free} entries of $\y$. Consider the set of all
$\y\in Y$ that have $j$ non-zero free entries (i.e., are equal to
the corresponding value in $\x$). The number of such $\y$'s is
${L_\x-\alpha\choose j}$ and for each such $\y$, $L_\y=j+\alpha
-\zeta$, where $\zeta$
 is the number of zeros in the intersection of $I$ and $A_x$. Therefore,
the entry in the row corresponding to $\u_I$ in the left-hand side
of Equation~(\ref{eq:eqn1}) is equal to
\[
\sum_{j=0}^{L_\x-\alpha}(-1)^{j+\alpha -\zeta}{L_\x-\alpha\choose
j} =
(-1)^{\alpha-\zeta}\sum_{j=0}^{L_\x-\alpha}(-1)^{j}{L_\x-\alpha\choose
j} = 0.
\]
Thus, all entries of the vector in the left-hand side of
Equation~(\ref{eq:eqn1}) are 0.
\end{proof}
\begin{lemma}\label{lem:21} \quad
${\rm rank}(\M{t}{v}{k})\leq\sum_{i=0}^t{k\choose i} (v-1)^i$.
\end{lemma}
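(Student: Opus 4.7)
The plan is to use Lemma~\ref{lem:20} as a column-reduction step: it shows that any column $C_\x$ with $L_\x > t$ is a linear combination of columns $C_\y$ indexed by $\y \in F_\x$ with smaller weight. Iterating this reduction shows that the column space is spanned by the columns $C_\z$ with $L_\z \le t$, and counting such columns gives the desired bound.

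Concretely, first I would isolate $C_\x$ in Equation~(\ref{eq:eqn1}). Since $\x \in F_\x$ contributes the term $(-1)^{L_\x} C_\x$, the relation can be rearranged as
\[
C_\x \;=\; -\sum_{\y \in F_\x,\, \y \neq \x} (-1)^{L_\y - L_\x}\, C_\y .
\]
By Lemma~\ref{lem:16}, every $\y \in F_\x$ with $\y \neq \x$ satisfies $L_\y < L_\x$, so this expresses $C_\x$ in terms of strictly lower-weight columns.

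Next I would run an induction on $L_\x$ to show that every column $C_\x$ lies in the span of $\mathcal{S} := \{C_\z : L_\z \le t\}$. The base case $L_\x \le t$ is trivial since then $C_\x \in \mathcal{S}$. For the inductive step with $L_\x > t$, the display above writes $C_\x$ as a combination of columns $C_\y$ with $L_\y < L_\x$, each of which lies in $\mathrm{span}(\mathcal{S})$ by the induction hypothesis; hence so does $C_\x$. The induction is well-founded because $L_\x$ is a non-negative integer that strictly decreases at each step.

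Finally, I would count: the number of $\x \in V^k$ with $L_\x = i$ equals $\binom{k}{i}(v-1)^i$ (choose which $i$ coordinates are nonzero, then assign each of them one of $v-1$ nonzero values). Therefore $|\mathcal{S}| = \sum_{i=0}^{t} \binom{k}{i}(v-1)^i$, and since the column space of $M$ is contained in $\mathrm{span}(\mathcal{S})$, we conclude $\mathrm{rank}(M) \le \sum_{i=0}^{t}\binom{k}{i}(v-1)^i$. There is no real obstacle here: all the combinatorial content has been absorbed into Lemma~\ref{lem:20}, and the only thing to verify carefully is that the appearance of $C_\x$ in its own defining relation (i.e., that $\x \in F_\x$ with coefficient $(-1)^{L_\x} \neq 0$) allows us to solve for it.
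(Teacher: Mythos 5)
Your proposal is correct and follows essentially the same route as the paper: the paper's proof also applies Lemma~\ref{lem:20} repeatedly to write any column $C_\x$ with $L_\x>t$ in terms of columns of weight at most $t$ and then counts those columns. You have merely made explicit what the paper leaves implicit, namely solving Equation~(\ref{eq:eqn1}) for $C_\x$ and organizing the ``repeated application'' as an induction on $L_\x$ justified by Lemma~\ref{lem:16}.
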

\begin{proof}
By applying Lemma~\ref{lem:20}, repeatedly, to any column  $C_\x$
with  $L_{\x} > t$, we can write $C_\x$ in terms of columns $C_\y$
with $L_{\y}\leq t$. Thus the latter columns form a spanning set
for the column space  of $\M{t}{v}{k}$. We noted earlier that
there are exactly $\sum_{i=0}^t{k\choose i} (v-1)^i$ of such
columns.\end{proof}

Theorem~\ref{th:rank} follows from Lemma~\ref{lem:21} and
Lemma~\ref{lem:19}.
\section{Latin squares and Latin trades}
\label{sec:Latinsquare}

As we noted earlier, a Latin square of order $v$ may be viewed as
an ${\rm OA}_2(v,3,1)$. So the matrix $\M{2}{v}{3}$ is of special
interest. In this section we find a basis for $\M{2}{v}{3}$ which
consists of the so-called intercalates.

We start with a few definitions. A {\sf partial Latin square} $P$
of order $v$ is a $v\times v$ array in which some of the entries
are filled with elements from a set $V=\{0,1,\dots, v-1\}$ in
such a way that each element of $V$ occurs at most once in each
row and at most once in each column of the array. In other words,
there are cells in the array that may be empty, but the positions
that are filled conform with the Latin property of array. Once
again a partial Latin square may be represented as a set of
ordered triples.  However in this case we will include triples of
the form $(i,j;\emptyset)$ and read this to mean that cell
$(i,j)$ of the partial Latin square is empty. The set of cells
${\cal S}_P= \{(i,j)\mid (i,j;P_{ij})\in P, {\rm\ for\ some\ }
P_{ij}\in V\}$ is said to determine the {\sf shape} of $P$ and
$|{\cal S}_P|$ is said to be the {\sf volume} of the partial
Latin square. That is, the volume is the number of nonempty
cells. For each row $r$, $0\leq r\leq v-1$, we let ${\cal R}_P^r$
denote the set of entries occurring in row $r$ of $P$. Formally,
${\cal R}_P^r=\{P_{rj}\mid P_{rj}\in V \wedge (r,j;P_{rj})\in
P\}$. Similarly, for each column $c$, $0\leq c\leq v-1$, we
define ${\cal C}_P^c=\{P_{ic}\mid P_{ic}\in V \wedge
(i,c;P_{ic})\in P\}$.

A {\sf Latin trade}, $T=(P,Q)$, of {\sf volume} $s$ is an ordered
set of two \pls s, of order $v$, such that
\begin{enumerate}
\item ${\cal S}_P={\cal S}_Q,$
\item for each $(i,j)\in {\cal S}_P$, $P_{ij}\neq Q_{ij},$
\item for each $r$, $0\leq r\leq v-1$, ${\cal R}_P^r={\cal R}_Q^r,$ and
\item for each $c$, $0\leq c\leq v-1$, ${\cal C}_P^c={\cal C}_Q^c$.
\end{enumerate}
Thus a Latin trade is a pair of disjoint \pls s of the same shape
and order, which are row-wise and column-wise mutually balanced.
We refer to the shape of a Latin trade $T$ as the shape of the
individual components $P$ and $Q$.

\begin{example}\label{ex:main}
Below is an example of two partial Latin squares which together
form a Latin trade of order 5 and of volume 19. To conserve space
we will display a Latin trade by superimposing one partial Latin
square on top of the other, and using subscripts to differentiate
the entries of the second from those of the first, as shown below.
\end{example}
\begin{figure}[ht]\label{fig:latin trade}
\begin{center}
\begin{tabular}{
 |@{\hspace{4.5pt}}c@{\hspace{3.5pt}}
 |@{\hspace{4.5pt}}c@{\hspace{3.5pt}}
 |@{\hspace{4.5pt}}c@{\hspace{3.5pt}}
 |@{\hspace{4.5pt}}c@{\hspace{3.5pt}}
 |@{\hspace{4.5pt}}c@{\hspace{3.5pt}}
 |}
\hline
\xx. & \xx. & 2 & 3 & 1 \\
\hline
\xx. & 2 & \xx. & 1 & 4 \\
\hline
1 & \xx. & 0 & 4 & 3 \\
\hline
0 & 4 & 1 & \xx. & 2 \\
\hline
4 & 1 & 3 & 2 & 0 \\
\hline
\end{tabular}\quad
\begin{tabular}{
 |@{\hspace{4.5pt}}c@{\hspace{3.5pt}}
 |@{\hspace{4.5pt}}c@{\hspace{3.5pt}}
 |@{\hspace{4.5pt}}c@{\hspace{3.5pt}}
 |@{\hspace{4.5pt}}c@{\hspace{3.5pt}}
 |@{\hspace{4.5pt}}c@{\hspace{3.5pt}}
 |}
\hline
\xx. & \xx. & 1 & 2 & 3 \\
\hline
\xx. & 1 & \xx. & 4 & 2 \\
\hline
4 & \xx. & 3 & 1 & 0 \\
\hline
1 & 2 & 0 & \xx. & 4 \\
\hline
0 & 4 & 2 & 3 & 1 \\
\hline
\end{tabular}\quad\quad\quad\quad
\begin{tabular}{
 |@{\hspace{1pt}}c@{\hspace{1pt}}
 |@{\hspace{1pt}}c@{\hspace{1pt}}
 |@{\hspace{1pt}}c@{\hspace{1pt}}
 |@{\hspace{1pt}}c@{\hspace{1pt}}
 |@{\hspace{1pt}}c@{\hspace{1pt}}
 |}
\hline
\xx. & \xx. & \m21 & \m32 & \m13 \\
\hline
\xx. & \m21 & \xx. & \m14 & \m42 \\
\hline
\m14 & \xx. & \m03 & \m41 & \m30 \\
\hline
\m01 & \m42 & \m10 & \xx. & \m24 \\
\hline
\m40 & \m14 & \m32 & \m23 & \m01 \\
\hline
\end{tabular}
\end{center}
\caption{A Latin trade}
\end{figure}

The concept of a Latin trade in a Latin square is similar to the
concept of a mutually balanced set or a trade in a block design,
see \cite{MR1056530}. The same as trades in design theory, the
discussion of Latin trades is related to intersection problems.
For example, they are relevant to the problem of finding the
possible number of intersections for Latin squares (see \cite{Fu},
\cite{MR1125351}, \cite{MR1278954}, and \cite{MR1874724}).
Also Latin trades arise naturally in the discussion of critical
sets in Latin squares (see for example~\cite{MR1393712} and
\cite{MR2048415}).

Latin trades have been studied by many authors. Fu and
Fu~\cite{MR1125351} used the term ``disjoint and mutually
balanced'' (DMB) partial Latin squares, Keedwell~\cite{MR1393712}
used ``critical partial Latin square'' (CPLS), while Donovan et
al.~\cite{MR1432756} used the term ``Latin interchange''. Adams et
al.~\cite{MR1874724} suggest the terminology `$2$-way Latin
trade'' for consistency with similar concepts in other
combinatorial structures such as block designs, graph colouring,
cycle systems, etc. See for instance~\cite{MR1056530},
\cite{MR1366859}, \cite{MR2001a:05059}, and~\cite{MR1828626} for
further use of trades.

A Latin trade of volume 4 which is unique (up to isomorphism), is
called an {\sf intercalate} (see~Figure~\ref{fig:intercalate}).
\begin{figure}[ht]\label{fig:intercalate}
\begin{center}
\begin{tabular}{
 |@{\hspace{5pt}}c@{\hspace{5pt}}
 |@{\hspace{1pt}}c@{\hspace{1pt}}
 |@{\hspace{5pt}}c@{\hspace{5pt}}
 |@{\hspace{1pt}}c@{\hspace{1pt}}
 |@{\hspace{5pt}}c@{\hspace{5pt}}
 |@{\hspace{5pt}}c@{\hspace{5pt}}
 |}
\hline
\xx. & \xx. &  \xx. &  \xx. &  \xx. & \xx. \\
\hline
\xx. & \m12 & \xx. & \m21 &  \xx. & \xx. \\
\hline
 \xx.& \xx. &  \xx. &  \xx. &  \xx. & \xx. \\
\hline
 \xx.&  \m21 &  \xx. & \m12 &  \xx. & \xx. \\
\hline
 \xx.& \xx. &  \xx. &  \xx. &  \xx. & \xx. \\
\hline
 \xx.& \xx. &  \xx. &  \xx. &  \xx. & \xx. \\
\hline
\end{tabular}
\end{center}
\caption{An intercalate}
\end{figure}

Similar to orthogonal arrays which  correspond to solutions of
the Equation~(\ref{eq:eqnf}), it is clear that any Latin trade
may also be treated as a solution ${\bf T}$, to the equation:
\begin{equation}\label{eq:eqn0} {\rm M}{\bf
T}= \overline{\bf 0},
\end{equation}
where $M = \M{2}{v}{3}$ and ${\bf T}$ is a (signed) frequency
vector derived from the trade $T=(P,Q)$, i.e.,

\ \ ${\bf{T}}_{ijk} =
\begin{cases}
    1        &\mbox{if $(i,j;k) \in P$}\\
    -1       &\mbox{if $(i,j;k) \in Q$} \\
    0       &\mbox{otherwise.}
 \end{cases}$

Therefore Latin trades are in the null space of $\M{2}{v}{3}$.
\begin{theorem}\label{th:basis}
There exists a basis for the null space of the matrix
$\M{2}{v}{3}$ consisting only of intercalates.
\end{theorem}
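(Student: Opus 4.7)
My plan is to produce an explicit family of $(v-1)^3$ intercalates indexed by triples in $\{1,\ldots,v-1\}^3$, and to prove linear independence by restricting these vectors to a carefully chosen set of $(v-1)^3$ columns on which the family exhibits a diagonal incidence pattern. The dimension count comes straight from Theorem~\ref{th:rank}: the matrix $\M{2}{v}{3}$ has $v^3$ columns and rank $1+3(v-1)+3(v-1)^2 = 3v^2-3v+1$, so its null space has dimension exactly $(v-1)^3$. Exhibiting that many linearly independent intercalates will therefore suffice.

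For each $(a,b,c)\in\{1,\ldots,v-1\}^3$, I define $T_{abc}$ to be the intercalate using rows $\{0,a\}$, columns $\{0,b\}$, and symbols $\{0,c\}$, oriented so that $P$ consists of the triples $(0,0;0),\,(0,b;c),\,(a,0;c),\,(a,b;0)$ and $Q$ of the triples $(0,0;c),\,(0,b;0),\,(a,0;0),\,(a,b;c)$. Each $T_{abc}$ is a Latin trade, so by Equation~(\ref{eq:eqn0}) its signed frequency vector lies in the null space of $\M{2}{v}{3}$. Writing down $(v-1)^3$ such intercalates is the easy part; the real question is whether they are independent.

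The main conceptual step, and the one that makes this particular parametrisation succeed, is that each $T_{abc}$ is uniquely pinned down by a single column of $\M{2}{v}{3}$. Among the eight positions supporting $T_{a'b'c'}$, exactly one — namely $(a',b',c')$ — has all three coordinates non-zero, and in the chosen orientation it carries weight $-1$. Consequently, for any $(a,b,c)\in\{1,\ldots,v-1\}^3$, the entry of $T_{a'b'c'}$ at the column $C_{(a,b,c)}$ equals $-\delta_{(a,b,c),(a',b',c')}$. Restricting the $(v-1)^3$ vectors $\{T_{abc}\}$ to the coordinates indexed by $\{1,\ldots,v-1\}^3$ therefore produces $-I$, which immediately gives linear independence. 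Together with the dimension count this shows the $T_{abc}$ form a basis of the null space, as required.
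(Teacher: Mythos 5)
Your proposal is correct and follows essentially the same route as the paper: the same family of $(v-1)^3$ intercalates on rows $\{0,a\}$, columns $\{0,b\}$, symbols $\{0,c\}$, with independence read off from the unique $-1$ entry in the all-nonzero coordinate $(a,b,c)$, combined with the nullity count $(v-1)^3$ from Theorem~\ref{th:rank}. Your restriction-to-$\{1,\ldots,v-1\}^3$-columns phrasing is just a slightly more explicit version of the paper's observation.
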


\begin{proof}
Latin trades are in the null space of $\M{2}{v}{3}$. By
Theorem~\ref{th:rank} we know that
\[
{\rm null}(\M{2}{v}{3}) = (v-1)^3.
\]
For each $i,j,k$; $1 \le i,j,k \le v-1$, consider the $ijk$'th
intercalate defined in Figure~4.

\begin{figure}[ht]\label{4}
\begin{center}
\def\arraystretch{0.7}
\begin{tabular}{
 @{\hspace{1pt}}c@{\hspace{2pt}}|@{\hspace{2pt}}
 @{\hspace{1pt}}c@{\hspace{1pt}}
 @{\hspace{1pt}}c@{\hspace{1pt}}
 @{\hspace{1pt}}c@{\hspace{1pt}}
 @{\hspace{1pt}}c@{\hspace{1pt}}
}
\n{}          &\n{0}       &\n{\cdots}  &\n{j}        &\n{\cdots}\\[0.25em]
\hline\\[-0.5em]
\n{0}         &\m{0}{k}    &\n{\cdots}  &\m{k}{0}     &\n{\cdots}\\
\n{\vdots}    &\n{\vdots}  &\n{\ddots}  &\n{\vdots}   &\n{\ddots}\\
\n{i}         &\m{k}{0}    &\n{\cdots}  &\m{0}{k}     &\n{\cdots}\\
\n{\cdot}     &\n{\cdot}   &\n{\cdots}  &\n{\cdot}    &\n{\cdots}\\
\n{\cdot}     &\n{\cdot}   &\n{\cdots}  &\n{\cdot}    &\n{\cdots}
\end{tabular}
\end{center}
\caption{Basis intercalates}
\end{figure}

There are $(v-1)^3$ of them.
The vectors corresponding to these intercalates are independent,
as for example, the frequency vector of the $ijk$'th intercalate
has an entry $-1$ in the ($i,j,k$) coordinate while all others
have 0 in that coordinate.  Therefore, they form a basis for the
null space of $\M{2}{v}{3}$.
\end{proof}

The above theorem shows, existentially, that every Latin trade can
be written as the sum of intercalates.  In~\cite{MR1880972},
Donovan and Mahmoodian introduced a simple combinatorial
algorithm which enables one to compute such a decomposition. But
by linear algebraic approach and knowing a basis which consists
only of intercalates, makes it straightforward to do this task.
We give an example of this method:

\begin{figure}[ht]\label{fig:intercalate2}
\begin{tabular}{
 |@{\hspace{1pt}}c@{\hspace{1pt}}
 |@{\hspace{1pt}}c@{\hspace{1pt}}
 |@{\hspace{1pt}}c@{\hspace{1pt}}
 |@{\hspace{1pt}}c@{\hspace{1pt}}
% |@{\hspace{6pt}}c@{\hspace{6pt}}
% |@{\hspace{6pt}}c@{\hspace{6pt}}
 |}
\hline
 \m01 & \m12 &  \m23 &  \m30  \\
\hline
 \m12 & \m21 &  \xx. &  \xx.  \\
\hline
 \m23 & \xx. &  \m32 &  \xx.  \\
\hline
 \m30 & \xx. &  \xx. &  \m03  \\
\hline
\end{tabular}\quad  =  \quad
%qqqqqqqqqqqqqqqqqqqqqqqqqqqqqqqqqqqqqqq
\begin{tabular}{
 |@{\hspace{1pt}}c@{\hspace{1pt}}
 |@{\hspace{1pt}}c@{\hspace{1pt}}
 |@{\hspace{5pt}}c@{\hspace{5pt}}
 |@{\hspace{5pt}}c@{\hspace{5pt}}
 |}
\hline
 \m01 & \m10 &  \xx. &  \xx.  \\
\hline
 \m10 & \m01 &  \xx. &  \xx.  \\
\hline
 \xx. & \xx. &  \xx. &  \xx.  \\
\hline
 \xx. & \xx. &  \xx. &  \xx.  \\
\hline
\end{tabular}
\quad - \quad
\begin{tabular}{
 |@{\hspace{1pt}}c@{\hspace{1pt}}
 |@{\hspace{1pt}}c@{\hspace{1pt}}
 |@{\hspace{5pt}}c@{\hspace{5pt}}
 |@{\hspace{5pt}}c@{\hspace{5pt}}
 |}
\hline
 \m02 & \m20 &  \xx. &  \xx.  \\
\hline
 \m20 & \m02 &  \xx. &  \xx.  \\
\hline
 \xx. & \xx. &  \xx. &  \xx.  \\
\hline
 \xx. & \xx. &  \xx. &  \xx.  \\
\hline
\end{tabular}
%%%%%
\begin{center}
\hspace*{12mm}+ \quad
%rrrrrrrrrrrrrrrrrrrrrrrrrrrrrrrrrrrrrrr
\begin{tabular}{
 |@{\hspace{1pt}}c@{\hspace{1pt}}
 |@{\hspace{5pt}}c@{\hspace{5pt}}
 |@{\hspace{1pt}}c@{\hspace{1pt}}
 |@{\hspace{5pt}}c@{\hspace{5pt}}
 |}
\hline
 \m02 & \xx. &  \m20 &  \xx.  \\
\hline
 \xx. & \xx. &  \xx. &  \xx.  \\
\hline
 \m20 & \xx. &  \m02 &  \xx.  \\
\hline
 \xx. & \xx. &  \xx. &  \xx.  \\
\hline
\end{tabular}
\quad - \quad
\begin{tabular}{
 |@{\hspace{1pt}}c@{\hspace{1pt}}
 |@{\hspace{5pt}}c@{\hspace{5pt}}
 |@{\hspace{1pt}}c@{\hspace{1pt}}
 |@{\hspace{5pt}}c@{\hspace{5pt}}
 |}
\hline
 \m03 & \xx. &  \m30 &  \xx.  \\
\hline
 \xx. & \xx. &  \xx. &  \xx.  \\
\hline
 \m30 & \xx. &  \m03 &  \xx.  \\
\hline
 \xx. & \xx. &  \xx. &  \xx.  \\
\hline
\end{tabular}
%qqqqqqqqqqqqqqqqqqqqqqqqqqqqqqqqqqqqqqq
\quad + \quad
\begin{tabular}{
 |@{\hspace{1pt}}c@{\hspace{1pt}}
 |@{\hspace{5pt}}c@{\hspace{5pt}}
 |@{\hspace{5pt}}c@{\hspace{5pt}}
 |@{\hspace{1pt}}c@{\hspace{1pt}}
 |}
\hline
 \m03 & \xx. &  \xx. &  \m30  \\
\hline
 \xx. & \xx. &  \xx. &  \xx.  \\
\hline
 \xx. & \xx. &  \xx. &  \xx.  \\
\hline
 \m30 & \xx. &  \xx. &  \m03  \\
\hline
\end{tabular}
\end{center}
\caption{An example of the algorithm}
\end{figure}
%sssssssssssssssssssssssssssssssssssssss

\section{A basis for the null space of \ $\M{t}{v}{t+1}$}
\label{sec:nullspace}
In this section we generalize the notion of Latin trades and find
a basis for the null space of $\M{t}{v}{t+1}$. First we note that
each Latin trade $T=(P,Q)$ may be represented by a homogeneous
polynomial of order $3$ as follows.  The polynomial is over a
non-commutative ring, hence the terms are ordered
multiplicatively (meaning that $x_{i_1}x_{i_2}x_{i_3}$ is
different from, say $x_{i_2}x_{i_1}x_{i_3}$):
$$
P(x_0,x_1,\ldots, x_{v-1})= \sum_{(i_1, i_2; i_{3}) \in
P}{x_{i_1}x_{i_2}x_{i_3}} - \sum_{(j_1, j_2; j_{3}) \in
Q}{x_{j_1}x_{j_2}x_{j_3}}.
$$
Note that the positive terms correspond to the elements of
 $P$  while the negative terms
correspond to the elements of $Q$. For example the intercalates
which form a basis for the null space of $\M{2}{v}{3}$ in
Theorem~\ref{th:basis} are:
\begin{eqnarray*}
P(x_0,x_1,\ldots, x_{v-1})&=&
x_{0}x_{0}x_{0}+x_{0}x_{j}x_{k}+x_{i}x_{0}x_{k}+x_{i}x_{j}x_{0}\\&&
-\ x_{0}x_{0}x_{k}-x_{0}x_{j}x_{0}-x_{i}x_{0}x_{0}-x_{i}x_{j}x_{k}\\
&=&(x_{0}-x_{i})(x_{0}-x_{j})(x_{0}-x_{k}), \quad 1 \le i,j,k \le
v-1.
\end{eqnarray*}

Similarly, each homogeneous polynomial $ P(x_0,x_1,\ldots,
x_{v-1})$ of order $k$, whose terms are ordered multiplicatively,
corresponds to a frequency vector ${\bf T}$. If ${\bf T}$
satisfies
\begin{equation}\label{eq:eqn4} {\rm M}{\bf
T}= \overline{\bf 0},
\end{equation}
where $M = \M{t}{v}{k}$, we call it a $\SFT{t}{v}{k}$. So, a
 Latin trade defined in
Section~\ref{sec:Latinsquare} is also $\T{2}{v}{3}$. Any
$\T{t}{v}{t+1}$ of the following form will be called a
$\SFI{t}{v}{t+1}$:
$$
P(x_0,x_1,\ldots, x_{v-1})=
(x_{i_1}-x_{j_1})(x_{i_2}-x_{j_2})\cdots(x_{i_{t+1}}-x_{j_{t+1}}),$$
where ${i_{m}} \ {\rm and} \ {j_{n}}\in \{0, \ldots, v-1\}$,  and
for each $l$, $i_l$ is distinct from $j_l$.
A $\I{2}{v}{3}$ is simply an intercalate in a Latin square,
defined in the previous section.
\begin{theorem}\label{th:basistt+1}
There exists a basis for the null space of the matrix
$\M{t}{v}{t+1}$ consisting only of $\ITI{t}{v}{t+1}$s.
\end{theorem}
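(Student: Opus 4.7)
The plan is to exhibit $(v-1)^{t+1}$ specific intercalates, verify that each lies in the null space, and then show their linear independence via a distinguishing coordinate.

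First, compute the target dimension. By Theorem~\ref{th:rank}, $\mathrm{rank}(\M{t}{v}{t+1}) = \sum_{i=0}^t \binom{t+1}{i}(v-1)^i$. Since the number of columns is $v^{t+1} = \sum_{i=0}^{t+1}\binom{t+1}{i}(v-1)^i$, the nullity equals $(v-1)^{t+1}$. So I need exactly $(v-1)^{t+1}$ linearly independent intercalates.

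Next, introduce the candidate family. For each $(i_1,\ldots,i_{t+1}) \in \{1,\ldots,v-1\}^{t+1}$, define the $(i_1,\ldots,i_{t+1})$-th intercalate to be
\[
P_{(i_1,\ldots,i_{t+1})}(x_0,\ldots,x_{v-1}) = (x_0-x_{i_1})(x_0-x_{i_2})\cdots(x_0-x_{i_{t+1}}).
\]
There are exactly $(v-1)^{t+1}$ such polynomials, matching the nullity. Each is a $\SFI{t}{v}{t+1}$ by the definition given above (with $j_\ell=0$ throughout, and $i_\ell \neq 0$). To see that the corresponding frequency vector $\mathbf{T}_{(i_1,\ldots,i_{t+1})}$ lies in the null space of $M$, expand the product: the monomials are precisely $z_1 z_2 \cdots z_{t+1}$ with $z_\ell \in \{x_0, x_{i_\ell}\}$, each carrying sign $(-1)^{L_\mathbf{z}}$ where $L_\mathbf{z}$ counts the nonzero coordinates. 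This is exactly the alternating sum $\sum_{\mathbf{y}\in F_\mathbf{x}} (-1)^{L_\mathbf{y}} C_\mathbf{y}$ from Lemma~\ref{lem:20} with $\mathbf{x} = (i_1,\ldots,i_{t+1})$ (noting $L_\mathbf{x} = t+1 > t$), and hence equals the zero vector.

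Finally, I establish linear independence by a simple pivot argument, following the model of Theorem~\ref{th:basis}. The $(i_1,\ldots,i_{t+1})$-th intercalate has a nonzero entry (namely $\pm 1$) in the all-nonzero coordinate $(i_1,\ldots,i_{t+1})$. For any other intercalate $P_{(i_1',\ldots,i_{t+1}')}$ in our family, its nonzero coordinates are of the form $(z_1,\ldots,z_{t+1})$ with $z_\ell \in \{0, i_\ell'\}$; for this to equal $(i_1,\ldots,i_{t+1})$, which is entry-wise nonzero, we would need $z_\ell = i_\ell = i_\ell'$ for all $\ell$, contradicting $(i_1',\ldots,i_{t+1}') \neq (i_1,\ldots,i_{t+1})$. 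Hence the $(v-1)^{t+1}$ frequency vectors are linearly independent, and having matched the nullity, they form a basis.

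The only subtle step is the verification that each candidate lies in the null space, but this is essentially a direct application of Lemma~\ref{lem:20} once one recognises the expansion of the product $\prod_\ell (x_0 - x_{i_\ell})$ as the alternating sum over $F_\mathbf{x}$; the independence and counting steps are routine.
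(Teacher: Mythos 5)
Your proof is correct and follows essentially the same route as the paper: the same family $(x_0-x_{i_1})\cdots(x_0-x_{i_{t+1}})$, the same nullity count from Theorem~\ref{th:rank}, and the same private-coordinate argument for independence. The only difference is that you make explicit, via Lemma~\ref{lem:20}, the null-space membership that the paper dismisses as ``easily checked,'' which is a welcome clarification rather than a new approach.
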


\begin{proof}
It can easily be checked that $\I{t}{v}{t+1}$s are included in
the null space of $\M{t}{v}{t+1}$. By Theorem~\ref{th:rank} we
know that \  $ {\rm null}(\M{t}{v}{t+1}) = (v-1)^{t+1}. $

Consider the following set of  $(v-1)^{t+1}$  intercalates:
\begin{eqnarray*}
&&P(x_0,x_1,\ldots, x_{v-1})=
(x_{0}-x_{i_1})(x_{0}-x_{i_2})\cdots(x_{0}-x_{i_{t+1}}),
\\&&\hspace*{7cm} 1 \le {i_{1}},{i_{2}},\ldots,{i_{k}}\le v-1.
\end{eqnarray*}

The intercalates in this set are independent.  For example, the
frequency vector of the ${i_{1}}{i_{2}}\cdots{i_{t+1}}$'th
intercalate has a non-zero entry, namely $(-1)^{t+1}$, in the
(${i_{1}},{i_{2}},\ldots,{i_{t+1}}$)th coordinate, while all
others have 0 in that coordinate.  Therefore, they form a basis
for the null space of $\M{t}{v}{t+1}$.
\end{proof}

\vspace*{4mm}
%\section*{Some questions}
Finally, we note that given the above generalization of the
concept of Latin trades, many questions similar to the ones in
the theory of $t$-trades in block designs, may be raised. For
example, it would be interesting to characterize the possible
support sizes of $\T{t}{v}{k}$s.
\section*{Acknowledgement}
The authors appreciate comments of John van Rees, specially for
the last lines of the proof of Lemma~\ref{lem:20}. This work was
partly done while the third author was spending his sabbatical
leave in the Microsoft and also in the Institute for Advanced
Studies in Basic Sciences (IASBS), Zanjan and finally in the
Institute for Studies in Theoretical Physics and
 Mathematics (IPM). . He would like to thank
all these institutions for their warm and generous hospitality
and support.
%\newpage
%\setlength{\parskip}{0mm}
%\bibliographystyle{plain}
\def\cprime{$'$}

%\bibliography{srMousavi79-82}
\end{document}